\documentclass[12pt]{amsart}
\usepackage{eurosym,graphicx,amscd,color,amsmath,amsfonts,amssymb,amsthm,centernot,geometry,hyperref,amsmath,amsfonts,amssymb,graphicx,mathtools}
\usepackage[initials]{amsrefs}
\usepackage[all]{xy}

\geometry{left=3cm,right=2.5cm,top=3cm,bottom=3.5cm}

\newtheorem{theorem}{Theorem}

\newtheorem{definition}{Definition}

\newtheorem{proposition}{Proposition}
\newtheorem{remark}{Remark}
\newtheorem{conjecture}{Conjecture}

\begin{document}
\title[On the set of functions  that vanish at infinity and have a unique maximum]{On the set of functions that vanish at infinity and have a unique maximum}

\author[Ara\'{u}jo]{G. Ara\'{u}jo}
\address[G. Ara\'{u}jo]{\mbox{}\newline \indent Departamento de Matem\'{a}tica \newline\indent Universidade Estadual da Para\'{\i}ba \newline\indent 58.429-500 Campina Grande, Brazil.}
\email{gustavoaraujo@servidor.uepb.edu.br}

\author[Barbosa]{A. Barbosa}
\address[A. Barbosa]{\mbox{}\newline \indent Departamento de Matemática \newline \indent Universidade Federal da Paraíba \newline\indent 58.051-900 João Pessoa, Brazil.}
\email{afsb@academico.ufpb.br}

\begin{abstract}
In this paper, we show that the set of continuous functions defined on $\mathbb{R}^n$ that approach zero at infinity and attain their maximum at precisely one (and only one) point is $n$-lineable but not $(n+2)$-lineable. This result complements some recent published works on an open question originally posed by Vladimir I. Gurariy (1935--2005) in 2003.
\end{abstract}

\keywords{Lineability; Spaceability; Linear spaces of continuous functions; Maximum}

\subjclass{15A03, 28A20, 46B87, 46E99, 60B99}

\thanks{The first author was supported by Grant 3024/2021, Para\'iba State Research Foundation (FAPESQ).
This study was financed in part by the Coordenação de Aperfeiçoamento de Pessoal de Nível Superior - Brasil (CAPES) - Finance Code 001}

\maketitle


\section{Notations and preliminaries}

Since its first appearance in 2005 (see \cite{AGS}, or check \cite{Studia2017} for a more modern reference), the concepts of lineability and spaceability have sparked interest of a large number of researchers. Moreover, just recently the American Mathematical Society incorporated these terms into its 2020 Mathematical Subject Classification, refereeing to them under the classifications 15A03 and 46B87. In essence, this idea revolves around identifying significant algebraic frameworks within non-linear subsets of a topological vector space, whenever possible. For a comprehensive overview of lineability, we refer to the recent monograph \cite{aron} or the expository paper \cite{BAMS2014}.

Let us briefly recall, here below, some terminology and notions on lineability and spaceability that shall be useful for our purpose in this manuscript. For any set $X$ we shall denote by $\mathrm{card}(X)$ the cardinality of $X$; we also define $\mathfrak{c} = \mathrm{card}(\mathbb{R})$ and $\aleph_0 = \mathrm{card} (\mathbb{N})$. Assume that $E$ is a vector space and $\beta\leq\dim(E)$ is any (finite or infinite) cardinal number. A subset $A \subset E$ is said to be \textit{$\beta$-lineable} if there exists a vector subspace $F$ of $E$ with $\dim(F) = \beta$ and $F\smallsetminus\{0\}\subset A$.

These notions were coined by V. I. Gurariy (1935-2005), and some early instances of findings in this field are also due to him. For instance, he  established that the collection of continuous nowhere differentiable functions on $\mathbb{R}$ contains, except for $\{0\}$, infinite-dimensional linear spaces. Moreover, in \cite{GQ}, Gurariy and Quarta showed the existence of two-dimensional spaces in $\hat{\mathcal{C}}(\mathbb{R})\cup\{0\}$, where $\hat{\mathcal{C}}(D)$ is the subset of $\mathcal{C}(D)$ consisting of bounded functions that attain their maximum at a single point. Here, as usual, for a topological space $D$, $\mathcal{C}(D)$ denotes the space of all continuous functions $f:D\longrightarrow\mathbb{R}$.

In the same article, Gurariy and Quarta claim not to be aware of the existence in $\hat{\mathcal{C}}(\mathbb{R})\cup\{0\}$ of spaces with dimensions greater than two. The answer to this question came only in 2020 in the article \cite{BHGJ}, where the authors, surprisingly, using tools from general topology, geometry, and complex analysis, not commonly found in the results of this theory, proved the non-$3$-lineability of $\hat{\mathcal{C}}(\mathbb{R})$.

In summary, from \cite{BHGJ,GQ} we have the following result:

\begin{theorem} \cite{BHGJ,GQ}
Let $F$ stand for a subspace of $\mathcal{C}(\mathbb{R})$ such that every nonzero function in $F$ attains its maximum at one (and only one) point. Then $\mathrm{dim}(F)\leq 2$. More generally, if $m\in \mathbb{N}$ and $F_m$ stands for a subspace of $\mathcal{C}(\mathbb{R})$ such that every nonzero function in $F_m$ attains its maximum at $m$ (and only $m$) points, then $\mathrm{dim}(F_m)\leq 2$. In other words, the subset of $\mathcal{C}(\mathbb{R})$ of functions attaining their maximum at $m$ (and only $m$) points is $2$-lineable but not $3$-lineable for every $m\in\mathbb{N}$.
\end{theorem}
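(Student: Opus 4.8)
The plan is to regard $\dim(F)$ as the dimension of a bounded curve and to translate the unique-maximum hypothesis into a strict-convexity condition on that curve's convex hull. Fix a basis $f_1,\dots,f_d$ of $F$ (so $d=\dim F$) and set $\Gamma(t)=(f_1(t),\dots,f_d(t))\in\mathbb{R}^d$. Since $F$ is a subspace, for every $f\in F$ both $f$ and $-f$ attain a maximum, so $f$ attains its maximum \emph{and} its minimum and is therefore bounded; hence $\Gamma$ is a bounded continuous curve. For nonzero $w=(w_1,\dots,w_d)$ the element $\sum_i w_if_i\in F$ is precisely $t\mapsto\langle w,\Gamma(t)\rangle$, so the hypothesis reads: every linear functional attains its maximum over $\Gamma(\mathbb{R})$ at exactly one (resp.\ exactly $m$) parameter value(s). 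Writing $K=\overline{\operatorname{conv}}(\Gamma(\mathbb{R}))$ for the compact convex hull, this becomes a statement about how the supporting hyperplanes of $K$ meet the curve.

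For the lower bound ($2$-lineability) I would exhibit an explicit two-dimensional space and verify the condition by elementary calculus. The naive attempt $\operatorname{span}(e^{-x^2},xe^{-x^2})$ fails, because the pure bump $e^{-x^2}$ has negative multiples whose supremum $0$ is only approached at infinity; thus the example must be arranged so that \emph{every} nonzero element is sign-changing and genuinely attains both a maximum and a minimum at finite points. I would therefore choose generators all of whose nonzero combinations are sign-changing with a single interior critical maximum, reducing the check to counting zeros of a rational (or entire) derivative. To pass from $m=1$ to general $m$, I would transplant such a pair onto $m$ disjoint bounded windows $I_1,\dots,I_m$ by identical affine copies of equal height, dominating the behaviour off $\bigcup_j I_j$; then every nonzero combination attains its maximum exactly once on each window, i.e.\ at exactly $m$ points.

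The heart of the matter is the upper bound $\dim(F)\le 2$. Suppose $\dim(F)\ge 3$, so $\Gamma$ lies in $\mathbb{R}^3$ and $\partial K$ is topologically a $2$-sphere. I would argue by a dichotomy on the local convexity of $\partial K$. If some supporting hyperplane meets $K$ in a two-dimensional face, its outward normal $w$ maximizes $\langle w,\cdot\rangle$ along that entire face, hence along more than one point of the curve carried on it, contradicting uniqueness (or exceeding the allowed count $m$). If instead $K$ is strictly convex, then every boundary point is exposed by a unique normal, so the exposed points form a full $2$-sphere; but every exposed point that is actually \emph{attained} must lie on the one-dimensional set $\Gamma(\mathbb{R})$, which cannot exhaust a $2$-sphere, so for a large family of directions the maximum is only approached and never attained. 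Either way a $3$-dimensional $F$ is impossible, and the same dichotomy disposes of $F_m$ for every finite $m$.

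The main obstacle is that $\Gamma$ is defined on the \emph{non-compact} line, so a priori the maximizing parameters, or the vertices of the offending face, could escape to infinity: a direction's supremum might be merely approached along a tail rather than attained, which would keep the unique-maximum condition from being violated and would also allow the curve to behave wildly (dense or space-filling) on $\partial K$. Removing this loophole is exactly where the finer input is needed. I would first study the limit behaviour of $\Gamma$ at $\pm\infty$ and compactify, replacing $\mathbb{R}$ by a circle so that $\Gamma$ becomes a closed loop whose convex hull has faces all of whose vertices are \emph{attained}; showing that the selection $w\mapsto\operatorname{argmax}_t\langle w,\Gamma(t)\rangle$ is well defined and continuous up to the ends (using uniqueness together with compactness of $K$) is the delicate step. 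It is precisely here that the topological and complex-analytic tools of \cite{BHGJ} control the tails and guarantee that a two-dimensional face yields at least two genuine, finite maximizers, closing the argument.
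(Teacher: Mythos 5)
There is a genuine gap: your argument never actually proves the hard direction, and note that the paper itself offers no proof of this statement either --- it is quoted from \cite{GQ} and \cite{BHGJ}, and the non-$3$-lineability is precisely the difficult theorem of \cite{BHGJ}. Your reduction to the bounded curve $\Gamma$ and $K=\overline{\operatorname{conv}}(\Gamma(\mathbb{R}))$ is sound, but the dichotomy collapses at both horns. In the flat-face horn, the face is a subset of the \emph{closed convex hull}, so its points need only be limits of $\Gamma$, typically along tails $t\to\pm\infty$ where the supremum is approached but never attained; a two-dimensional face therefore does not produce a second attained maximizer, and this is not a peripheral technicality but the entire content of the theorem --- you explicitly delegate it (``the tools of \cite{BHGJ} control the tails''), which makes the attempt a citation rather than a proof. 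In the strictly convex horn, the claim that $\Gamma(\mathbb{R})$ ``cannot exhaust a $2$-sphere'' because it is one-dimensional is false as stated: continuous images of $\mathbb{R}$ can cover $S^{2}$ (Peano curves). What would rescue the case $m=1$ is injectivity of the maximizing parameter (two parameters over one exposed point would give two maximum points of the corresponding function) combined with a Baire-category plus invariance-of-domain argument showing that no continuous bijection from a closed subset of $\mathbb{R}$ onto $S^{2}$ exists; you supply neither step, and for $m\geq 2$ even the injectivity fails. Moreover, the dichotomy is not exhaustive ($K$ can fail strict convexity while having only one-dimensional faces, i.e.\ boundary segments), and the proposed compactification of $\Gamma$ into a closed loop presupposes that $\lim_{t\to+\infty}\Gamma(t)$ and $\lim_{t\to-\infty}\Gamma(t)$ exist and coincide, which is false for bounded oscillating curves.

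The lower bound for general $m$ is also broken, provably so by results quoted in this very paper. If $g_{1},g_{2}$ are built from identical equal-height affine copies of a fixed pair on $m$ disjoint compact windows $I_{1},\dots,I_{m}$, with the behaviour off $\bigcup_{j}I_{j}$ strictly dominated, then restriction to the single window $I_{1}$ is a linear isomorphism of $\operatorname{span}\{g_{1},g_{2}\}$ onto a two-dimensional subspace of $\mathcal{C}(I_{1})$ every nonzero element of which attains its maximum on $I_{1}$ at exactly one point; that is a two-dimensional subspace of $\hat{\mathcal{C}}(I_{1})\cup\{0\}$ with $I_{1}\subset\mathbb{R}$ compact, contradicting Theorem \ref{t1} with $n=1$ (equivalently, the Gurariy--Quarta result on compact intervals). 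So ``one maximizer per window with equal heights'' cannot be realized, and the genuine $2$-lineability construction for $m$ points in \cite{BHGJ} must distribute the maximizers in a less local way. Relatedly, for $m=1$ your insistence that every nonzero element be sign-changing is incompatible with vanishing at infinity by Proposition \ref{p1} with $n=1$ (no two-dimensional alternating subspace of $\mathcal{C}_0(\mathbb{R})$ lies in $\hat{\mathcal{C}}_0(\mathbb{R})\cup\{0\}$); a valid two-dimensional example in $\hat{\mathcal{C}}(\mathbb{R})$ does exist in \cite{GQ}, but it cannot come from compactly supported bumps, so the ``elementary calculus on bounded windows'' plan fails at exactly the point where the known constructions become delicate.
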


In an attempt to answer the above question, Gurariy and Quarta, in the same article \cite{GQ}, studied the lineability of $\hat{\mathcal{C}}(\mathbb{R})$ restricted to functions that vanish at infinity. This additional property led to a simpler answer than the one given in \cite{BHGJ}.

The purpose of this article is to generalize this latter mentioned study for Euclidean spaces of dimension $n$.

\section{Preliminary Tools}

We shall denote by $\mathcal{C}_0(\mathbb{R})$ the subspace of ${\mathcal{C}}(\mathbb{R})$ consisting of functions such that $$\displaystyle \lim_{|x|\rightarrow\infty} f(x)=0,$$ and we denote by $\hat{\mathcal{C}}_0(\mathbb{R})$ the subset of $\mathcal{C}_0(\mathbb{R})$ consisting of all functions that attain their maximum at a unique point.

\begin{theorem}\label{gqc0} \cite{GQ}
$\hat{\mathcal{C}}_0(\mathbb{R})$ is $2$-lineable but not $3$-lineable.
\end{theorem}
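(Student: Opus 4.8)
The plan is to translate the problem into convex geometry. Fixing a basis $f_{1},\dots ,f_{k}$ of a candidate $k$-dimensional subspace $F$, I regard it as a single curve $\gamma=(f_{1},\dots ,f_{k})\colon \mathbb{R}\to\mathbb{R}^{k}$. For $\mathbf{a}\in\mathbb{R}^{k}\setminus\{0\}$ the combination $h_{\mathbf{a}}=\langle \mathbf{a},\gamma(\cdot)\rangle$ belongs to $\hat{\mathcal{C}}_{0}(\mathbb{R})$ exactly when the linear functional $\langle\mathbf{a},\cdot\rangle$ attains its maximum over the image $\gamma(\mathbb{R})$ at a single \emph{parameter} $x\in\mathbb{R}$. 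Since each $f_{i}$ vanishes at infinity, $\gamma$ extends continuously to the one–point compactification with value $0$ at $\infty$, so $K=\gamma(\mathbb{R})\cup\{0\}$ is compact; and linear independence of the $f_{i}$ forbids $K$ from lying in a hyperplane $\{\langle\mathbf{a},\cdot\rangle=c\}$ (evaluating at $\infty$ forces $c=0$ and then $h_{\mathbf{a}}\equiv0$). Hence $C:=\operatorname{conv}(K)$ is a genuine $k$-dimensional convex body containing the origin, and the whole theorem reduces to asking for which $k$ a curve can have every supporting functional of $C$ maximized on $K$ at a unique parameter.

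For the non-$3$-lineability I take $k=3$ and argue by contradiction. Writing $M(\mathbf{a})=\max_{x}\langle\mathbf{a},\gamma(x)\rangle\ge\langle\mathbf{a},0\rangle=0$, the set $U=\{\mathbf{a}\in S^{2}:M(\mathbf{a})>0\}$ is open and nonempty (otherwise $\gamma\equiv0$). For $\mathbf{a}\in U$ the origin cannot lie in the maximizing face $F_{\mathbf{a}}$; if $F_{\mathbf{a}}$ were larger than a point its (at least two) extreme points would be two \emph{distinct finite} maximizers, contradicting uniqueness, so $C$ is strictly convex in every direction of $U$. Standard continuity of the arg-max (the combinations converge uniformly and vanish uniformly at infinity, so the unique maximizers stay in a fixed compact set) gives a continuous $x^{\ast}\colon U\to\mathbb{R}$, and near a smooth strictly convex boundary point the exposed-point map $P=\gamma\circ x^{\ast}$ is a homeomorphism onto a $2$-disc $\Delta\subset\partial C$. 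Then $\gamma^{-1}=x^{\ast}\circ P^{-1}\colon\Delta\to\mathbb{R}$ is continuous, and it is injective because uniqueness of the maximizing parameter forces $\gamma^{-1}(y)$ to be a single point for each exposed $y$. A continuous injection of a $2$-disc into $\mathbb{R}$ is impossible (deleting an interior point keeps the disc connected but disconnects an interval), the desired contradiction; hence $\dim F\le2$. The main obstacle is precisely this last extraction: producing a genuine $2$-dimensional, smooth, strictly convex piece of $\partial C$ on which $\gamma^{-1}$ is single-valued and continuous, which amounts to controlling the point at infinity and the exceptional cone of directions $Z=\{M=0\}$ (the normal cone of $C$ at $0$); all the care goes into checking that $U=S^{2}\setminus Z$ really faces such a disc.

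This same dictionary explains why $k=2$ succeeds and isolates the delicate point. For $k=2$ the analogue of $\Delta$ is one-dimensional, and a continuous injection of an arc into $\mathbb{R}$ is \emph{not} forbidden, so no obstruction remains; I would realize it by a direct construction. Choose $\gamma\colon\mathbb{R}\to\mathbb{R}^{2}$ so that $\partial C$ is a strictly convex arc traced injectively, with the origin occurring as a \emph{corner} of $C$ (a nondegenerate normal cone), and—crucially—so that $\gamma(x)=0$ at exactly one finite parameter $x_{0}$ (besides the limits at $\pm\infty$). The trick is that then $h_{\mathbf{a}}(x_{0})=\langle\mathbf{a},0\rangle=0$ for every $\mathbf{a}$, so no nonzero combination is everywhere negative and every maximum is attained: directions with $M>0$ have their unique maximizer on the strictly convex arc, while the nonpositive combinations coming from the corner's normal cone attain their maximal value $0$ at the single parameter $x_{0}$. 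This also diagnoses why the naive image ``circle through the origin'' given by $x/(1+x^{2})$ and $1/(1+x^{2})$ fails—there the origin is a \emph{smooth} boundary point met only at $\pm\infty$, so the outward normal direction yields the strictly negative combination $-1/(1+x^{2})$ with no maximum—and it shows the construction must force the origin to be a corner met at a finite parameter. Writing down explicit such $f,g$ and checking the finitely many cases is then routine calculus, and together with the previous paragraph this gives $2$-lineable but not $3$-lineable.
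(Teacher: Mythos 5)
Your non-$3$-lineability argument has a genuine hole at exactly the step you flag as ``the main obstacle,'' and it is not a matter of care but of a missing (and possibly unavailable) fact. Everything up to the continuity of $x^{\ast}$ on $U$ is sound: $K=\gamma(\mathbb{R})\cup\{0\}$ is compact, $C$ is full-dimensional, and uniqueness of the maximizing parameter does force every face $F_{\mathbf{a}}$ with $\mathbf{a}\in U$ to be a singleton (since a supporting hyperplane $H$ gives $C\cap H=\operatorname{conv}(K\cap H)$, two points in the face yield two maximizing parameters). But singleton faces only give \emph{strict convexity} of the exposed part of $\partial C$; they give no \emph{smoothness}, and your disc $\Delta$ needs both, on a whole open patch, to make $P$ a local homeomorphism. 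A convex body can be strictly convex and yet have its non-smooth points dense in the boundary (take the graph of $x^{2}+y^{2}+\sum_{n}2^{-n}|x-q_{n}|+\sum_{m}2^{-m}|y-r_{m}|$ over dense sequences $(q_{n}),(r_{m})$), in which case $P$ is locally constant on each normal-cone patch, is nowhere locally injective, and no such $\Delta$ exists. Nothing in your hypotheses excludes this for $C=\operatorname{conv}(K)$, so the proof does not close. The approach \emph{can} be repaired without any smoothness: if $x^{\ast}(\mathbf{a})=x^{\ast}(-\mathbf{a})$ for $\mathbf{a},-\mathbf{a}\in U$, the point $\gamma(x^{\ast}(\mathbf{a}))$ would maximize both $\langle\mathbf{a},\cdot\rangle$ and $-\langle\mathbf{a},\cdot\rangle$ with both maxima positive, which is absurd; since $Z=\{M=0\}$ is the trace on $S^{2}$ of a closed \emph{pointed} convex cone (pointed because $C$ is full-dimensional), some great circle $G$ avoids $Z\cup(-Z)$, and then $\mathbf{a}\mapsto x^{\ast}(\mathbf{a})-x^{\ast}(-\mathbf{a})$ is continuous, odd, and nonvanishing on $G$, contradicting the intermediate value theorem. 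You did not supply this (or any) substitute for the disc extraction.

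For the record, the paper itself does not prove this statement (it cites Gurariy--Quarta), and its machinery for the general case runs along a completely different, more elementary line than yours: extract from any candidate space an alternating subspace of one dimension less (Proposition~\ref{p2}), use the vanishing at infinity plus compactness of the coefficient sphere to get uniform bounds $m(g)<-N<N<M(g)$ and a radius $A$ with $|g|\leq N$ off $B_{A}$ (Proposition~\ref{p1}), and then restrict to the compact set $B_{A}$, where Theorem~\ref{t1} caps the dimension; for $n=1$ this yields non-$3$-lineability with no convex geometry at all. Your $2$-lineability half, by contrast, is a correct blueprint: the diagnosis that the origin must be a \emph{corner} of $C$ hit at exactly one finite parameter $x_{0}$ (and why the circle $\bigl(x/(1+x^{2}),1/(1+x^{2})\bigr)$ fails) is exactly right, and a strictly convex Jordan boundary traced injectively on $(-\infty,x_{0}]$ with an interior return path on $[x_{0},\infty)$ does verify all cases; you stop short of exhibiting explicit $f,g$, but that part is fairly called routine.
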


In the following, we will use in $\mathbb{R}^n$ the norm $\|\cdot\|$ induced from the usual inner product $\left<\cdot,\cdot\right>$. Given $n\in\mathbb{N}$, we denote by $\mathcal{C}_0(\mathbb{R}^n)$ the subspace of ${\mathcal{C}}(\mathbb{R}^n)$ consisting of functions such that $\lim_{\|x\|\rightarrow\infty} f(x)=0$, and we define $\hat{\mathcal{C}}_0(\mathbb{R}^n)$ as the subset of $\mathcal{C}_0(\mathbb{R}^n)$ consisting of all functions that attain the maximum at only one point. Recall that since the functions in $\mathcal{C}_0(\mathbb{R}^n)$ converge to zero at infinity, the norm $\|f\|_\infty=\sup_{x\in\mathbb{R}^n}|f(x)|$ is well-defined in $\mathcal{C}_0(\mathbb{R}^n)$ and makes it a Banach space.

A natural step in advancing this theory would be to study the lineability of $\hat{\mathcal{C}}(\mathbb{R}^n)$. However, a detailed analysis of the solution presented in \cite{BHGJ} shows that the method used in \cite{BHGJ} apparently cannot be extended to the $n$-dimensional case.

On the other hand, since the lineability of $\hat{\mathcal{C}}_0(\mathbb{R})$ was obtained more naturally, it seems natural to start this study by generalizing Theorem \ref{gqc0}.

In the quest for an answer to the problem raised by Gurariy and Quarta in \cite{GQ}, the authors of \cite{GVDJD} proved the following result, which generalizes one of the three main results of \cite{GQ}:

\begin{theorem} \cite{GVDJD} \label{t1}
If $K$ is a compact subset of $\mathbb{R}^n$, and $V$ is a subspace of $\mathcal{C}(K)$ contained in $\hat{\mathcal{C}}(K)\cup\{0\}$, then $\dim(V)\leq n$.
\end{theorem}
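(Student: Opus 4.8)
The plan is to use the Borsuk--Ulam theorem via the natural map that assigns to each nonzero function in $V$ the unique point at which it attains its maximum. It suffices to show that every finite-dimensional subspace $W\subseteq V$ has $\dim(W)\leq n$, since this bounds $\dim(V)$ as well. Fix such a $W$ with $\dim(W)=m$; the cases where $K$ has at most one point are trivial (then $\dim\mathcal{C}(K)\leq 1\leq n$), so assume $K$ contains at least two points. Fix a basis $f_1,\dots,f_m$ of $W$ and, for each $a=(a_1,\dots,a_m)$ in the unit sphere $S^{m-1}\subset\mathbb{R}^m$, set $f_a=\sum_{i=1}^m a_i f_i$. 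Since $a\neq 0$, the function $f_a$ is a nonzero element of $W\subseteq V$, hence it lies in $\hat{\mathcal{C}}(K)$ and attains its maximum at a single point, which we denote $\Phi(a)\in K\subset\mathbb{R}^n$.

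First I would verify that $\Phi:S^{m-1}\to\mathbb{R}^n$ is continuous. If $a_k\to a$ in $S^{m-1}$, then $f_{a_k}\to f_a$ uniformly on the compact set $K$, because the coefficients converge and the $f_i$ are fixed bounded continuous functions. Passing to a convergent subsequence of $\Phi(a_k)$ (possible by compactness of $K$) and using uniform convergence, one checks that any subsequential limit must be a maximizer of $f_a$, hence equals $\Phi(a)$ by uniqueness of the maximizer. Since every convergent subsequence has the same limit $\Phi(a)$, the whole sequence converges to $\Phi(a)$, so $\Phi$ is continuous.

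Next I would establish the antipodal property $\Phi(a)\neq\Phi(-a)$ for every $a\in S^{m-1}$. Observe that $\Phi(a)$ is the argmax of $f_a$, while $\Phi(-a)$ is the argmax of $-f_a$, that is, the argmin of $f_a$. If these coincided at some point $x_0$, then $x_0$ would be simultaneously a global maximizer and a global minimizer of $f_a$, forcing $f_a$ to be constant; but a nonzero constant function attains its maximum at every point of $K$, contradicting uniqueness since $K$ has at least two points. Hence $\Phi(a)\neq\Phi(-a)$. Defining $\Psi(a)=\Phi(a)-\Phi(-a)$ therefore gives a continuous map $\Psi:S^{m-1}\to\mathbb{R}^n\smallsetminus\{0\}$ which is odd, since $\Psi(-a)=\Phi(-a)-\Phi(a)=-\Psi(a)$. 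Normalizing, $a\mapsto\Psi(a)/\|\Psi(a)\|$ is a continuous antipodal map $S^{m-1}\to S^{n-1}$.

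Finally, the Borsuk--Ulam theorem forbids the existence of a continuous odd map $S^{m-1}\to S^{n-1}$ whenever $m-1>n-1$, so we must have $m\leq n$, which is the desired bound. I expect the main obstacle to be the continuity of the argmax map $\Phi$: it relies essentially on the \emph{uniqueness} of the maximizer (without which the argmax is only upper semicontinuous as a set-valued map) together with the compactness of $K$ to extract convergent subsequences. The remaining input, namely recognizing that the oddness of $\Psi$ places us exactly in the setting of Borsuk--Ulam, is the conceptual heart of the argument.
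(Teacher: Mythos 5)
Your proof is correct; note that the paper itself states this theorem without proof, importing it from \cite{GVDJD}, and your argument --- the argmax map $\Phi$ on the coefficient sphere $S^{m-1}$, made continuous by the uniqueness of maximizers together with compactness of $K$, the antipodal inequality $\Phi(a)\neq\Phi(-a)$ obtained by observing that $\Phi(-a)$ is the argmin of $f_a$ and that a common max/min point would force $f_a$ to be constant (impossible when $\mathrm{card}(K)\geq 2$), and the Borsuk--Ulam obstruction to a continuous odd map $S^{m-1}\to S^{n-1}$ for $m>n$ --- is essentially the proof given in that reference. The reduction to finite-dimensional subspaces and the degenerate cases ($K$ empty or a singleton, $m\leq 1$) are also handled correctly, so there is nothing to repair.
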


In this work, we generalize two other results from \cite{GQ}, and as a result, we demonstrate that the set $\hat{\mathcal{C}}_0(\mathbb{R}^n)$ is $n$-lineable but not $(n+2)$-lineable.

Beforehand, let us state and prove some auxiliary results. The following Theorems \ref{t2} and \ref{t3} are well-known, but we will present them with their respective proofs as they will be useful in establishing the main results of this work.

Let us denote by $S^{n-1}$ the unitary sphere of $\mathbb{R}^n$, that is, $$S^{n-1}=\{x\in\mathbb{R}^n \ : \ \|x\|=1\}.$$

\begin{theorem} \cite{GVDJD} \label{t2}
Let $n\geq2$, and let $D$ be a topological space such that there exists a continuous bijection from $D$ to $S^{n-1}$. Then $\hat{\mathcal{C}}(D)$ is $n$-lineable. In particular, $\hat{\mathcal{C}}(S^{n-1})$ is $n$-lineable for every $n\geq 2$.
\end{theorem}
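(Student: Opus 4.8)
The plan is to build the required $n$-dimensional subspace explicitly on $S^{n-1}$ and then transport it to $D$ through the given continuous bijection $\phi : D \to S^{n-1}$. On the sphere the natural candidates are the restrictions of the linear functionals of $\mathbb{R}^n$: for each $a\in\mathbb{R}^n$ I would set $\ell_a(x)=\left<a,x\right>$ for $x\in S^{n-1}$. The assignment $a\mapsto\ell_a$ is linear, and it is injective, since $\ell_a\equiv 0$ on $S^{n-1}$ forces $\left<a,x\right>=0$ for every unit vector $x$ and hence $a=0$. Thus $F_0=\{\ell_a : a\in\mathbb{R}^n\}$ is an $n$-dimensional subspace of $\mathcal{C}(S^{n-1})$.

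First I would verify that every nonzero element of $F_0$ lies in $\hat{\mathcal{C}}(S^{n-1})$. Fix $a\neq 0$. By the Cauchy--Schwarz inequality, $\left<a,x\right>\leq\|a\|\,\|x\|=\|a\|$ for all $x\in S^{n-1}$, with equality precisely when $x$ is a nonnegative multiple of $a$; on the unit sphere this occurs at the single point $x=a/\|a\|$. Hence $\ell_a$ attains its maximum at exactly one point and, being continuous on the compact set $S^{n-1}$, is bounded. This already settles the ``in particular'' assertion that $\hat{\mathcal{C}}(S^{n-1})$ is $n$-lineable.

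Next I would transport $F_0$ to $D$ by pre-composition, defining $F=\{f\circ\phi : f\in F_0\}$. Each $f\circ\phi$ is continuous, because $\phi$ is continuous and $f$ is continuous, and it is bounded, because $\phi(D)=S^{n-1}$ forces the range of $f\circ\phi$ to coincide with the range of $f$. The map $f\mapsto f\circ\phi$ is linear and injective, since surjectivity of $\phi$ makes $f\circ\phi=g\circ\phi$ imply $f=g$ on all of $S^{n-1}$; therefore $\dim F=\dim F_0=n$. To see that each nonzero element of $F$ attains its maximum at a unique point of $D$, both properties of $\phi$ enter: surjectivity gives $\sup_D(f\circ\phi)=\sup_{S^{n-1}}f$, so the supremum is attained at some preimage of the maximizer of $f$, while injectivity guarantees that the unique maximizer $p\in S^{n-1}$ of $f$ has a single preimage $\phi^{-1}(p)$, which is then the unique maximizer of $f\circ\phi$. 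Consequently $F\smallsetminus\{0\}\subset\hat{\mathcal{C}}(D)$, so $\hat{\mathcal{C}}(D)$ is $n$-lineable.

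The construction is routine once the correct subspace is identified; the only delicate point is that $\phi$ is assumed to be merely a continuous bijection, not a homeomorphism, so I expect this to be the main (mild) obstacle. The resolution is that continuity of $\phi$ alone secures continuity of the pullbacks, whereas attainment and uniqueness of the maximum depend only on the set-theoretic bijectivity of $\phi$; no continuity of $\phi^{-1}$ is required.
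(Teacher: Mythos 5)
Your proposal is correct and follows essentially the same route as the paper: the space of restricted linear functionals $\ell_a(x)=\left<a,x\right>$ is exactly the span of the coordinate projections $\pi_i$ used in the paper, the Cauchy--Schwarz argument for the unique maximizer $a/\|a\|$ is the same (you streamline it slightly by invoking the equality case directly rather than ruling out $-a/\|a\|$ separately), and the transport to $D$ by precomposition with the continuous bijection, using surjectivity for injectivity of the pullback and bijectivity for uniqueness of the maximizer, matches the paper's argument. Your closing observation that continuity of $\phi^{-1}$ is never needed is accurate and consistent with the paper's proof.
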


\begin{proof}
Let $\pi_i:S^{n-1}\longrightarrow \mathbb{R}$ be the projection onto the $i$-th coordinate, $i=1,\ldots,n$, and take $G:D\longrightarrow S^{n-1}$ a continuous bijection. Initially note that the functions $\pi_i$, $i=1,\ldots,n$, are linearly independent.


Let us verify that every non-trivial linear combination of the functions $\pi_i$, $i=1,\ldots,n$, has just one point of maximum. Let $f=\sum_{i=1}^na_i \pi_i$, with $a_i\neq0$ for some $i$, and $x\in S^{n-1}$. Then, $$f(x)=\sum_{i=1}^na_i \pi_i(x)=\left<a,x\right>,$$ where $a=(a_1,\ldots,a_n)$. Note that $f(x)\leq\|a\|\|x\|=\|a\|$. Moreover, observe that $\frac{a}{\|a\|}\in S^{n-1}$ and $$f\left(\frac{a}{\|a\|}\right)=\left<a,\frac{a}{\|a\|}\right>=\frac{\left<a,a\right>}{\|a\|}=\|a\|,$$ which means that $\frac{a}{\|a\|}$ is a maximum point of $f$. If $y\in S^{n-1}$ is another point of maximum of $f$, then $\left<a,y\right>=\|a\|=\|a\|\|y\|$, that is, $a$ and $y$ are linearly dependent. So $y=\frac{a}{\|a\|}$ or $y=-\frac{a}{\|a\|}$. Since $$f\left(-\frac{a}{\|a\|}\right)=-\|a\|<0<\|a\|=f\left(\frac{a}{\|a\|}\right),$$ we conclude that $y=\frac{a}{\|a\|}$, and consequently we get the unicity of the maximum point.

Let us consider now the set $$\{\pi_i\circ G \ : \ i=1,\ldots,n\}.$$ Since $G$ and $\pi_i$ are continuous, we conclude that $\pi_i\circ G$ is continuous for every $i$. If $$g=\sum_{i=1}^na_i(\pi_i\circ G)=\left(\sum_{i=1}^na_i\pi_i\right)\circ G=f\circ G$$ is a non-trivial linear combination, we have $g\neq 0$ due to the linear independence of the functions $\pi_i$, $i=1,\ldots,n$, and the fact that $G$ is a bijection. Furthermore, since $f$ attains its maximum at only in one point of $S^{n-1}$, it also follows from the bijectiveness of $G$ that $g$ attains its maximum at only in one point. This is sufficient to demonstrate that $\mathrm{span}\{\pi_i\circ G \ : \ i=1,\ldots,n\}$ is a vector space contained within $\hat{\mathcal{C}}(D)\cup\{0\}$.
\end{proof}


\begin{theorem} \cite{GVDJD} \label{t3}
Let $n\geq2$, and let $D$ be a topological space containing a closed set $Y$ such that there exists a continuous bijection $F:Y\longrightarrow S^{n-1}$ and a continuous extension of $F$, $G:D\longrightarrow\mathbb{R}^n$, with $\|G(x)\|<1$ for every $x\notin Y$. Then $\hat{\mathcal{C}}(D)$ is $n$-lineable.
\end{theorem}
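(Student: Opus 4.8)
The plan is to mirror the construction used in the proof of Theorem \ref{t2}, replacing the continuous bijection onto $S^{n-1}$ by the extended map $G$, and then to exploit the strict norm bound off $Y$ to pin down where the maximum can occur. Concretely, I would again let $\pi_i : \mathbb{R}^n \longrightarrow \mathbb{R}$ be the projection onto the $i$-th coordinate and consider the functions $\pi_i \circ G : D \longrightarrow \mathbb{R}$, which are continuous since both $G$ and $\pi_i$ are. I would then argue that $\mathrm{span}\{\pi_i \circ G : i = 1, \ldots, n\}$ is an $n$-dimensional subspace of $\mathcal{C}(D)$ contained in $\hat{\mathcal{C}}(D)\cup\{0\}$.

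Given a non-trivial combination $g = \sum_{i=1}^n a_i(\pi_i \circ G)$ with $a=(a_1,\ldots,a_n)\neq 0$, I would write $g(x) = \left<a, G(x)\right>$ for every $x \in D$. The key observation is that $\|G(x)\|\leq 1$ for all $x\in D$, with equality exactly on $Y$: indeed $G|_Y = F$ maps into $S^{n-1}$, so $\|G(x)\|=1$ there, while $\|G(x)\|<1$ for $x\notin Y$ by hypothesis. By Cauchy--Schwarz, $g(x)=\left<a,G(x)\right>\leq \|a\|\,\|G(x)\|\leq \|a\|$, and for $x\notin Y$ this inequality is strict because $\|a\|>0$ and $\|G(x)\|<1$. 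Hence every maximum point of $g$ must lie in $Y$.

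On $Y$ itself, I would note that $g|_Y = f\circ F$, where $f=\sum_{i=1}^n a_i \pi_i$ is precisely the linear functional on $S^{n-1}$ analysed in Theorem \ref{t2}. That result shows $f$ attains its maximum $\|a\|$ at the unique point $a/\|a\|\in S^{n-1}$; since $F$ is a bijection, $g|_Y$ attains the value $\|a\|$ at the single point $y_0 = F^{-1}\!\left(a/\|a\|\right)$ and nowhere else on $Y$. Combining the two steps, $g$ attains its global maximum $\|a\|$ at $y_0$ alone, so $g\in\hat{\mathcal{C}}(D)$. In particular $g(y_0)=\|a\|>0$ forces $g\neq 0$, which simultaneously yields the linear independence of the $\pi_i\circ G$ and hence $\dim=n$.

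The routine parts are the Cauchy--Schwarz estimate and the transfer of uniqueness from Theorem \ref{t2}. The one point deserving real care --- the main obstacle --- is verifying that the maximum cannot escape from $Y$ into $D\smallsetminus Y$; this is exactly what the hypothesis $\|G(x)\|<1$ for $x\notin Y$ secures, upgrading the non-strict Cauchy--Schwarz bound to a strict one off $Y$. I would emphasize that this strictness is essential: without it one could only conclude that the supremum of $g$ equals $\|a\|$, not that it is attained, let alone attained at a single point.
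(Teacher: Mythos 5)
Your proposal is correct and follows essentially the same route as the paper: both take $\mathrm{span}\{\pi_i\circ G : i=1,\ldots,n\}$, reduce uniqueness of the maximum on $Y$ to the sphere argument of Theorem \ref{t2} via the bijection $F$, and use $\|G(x)\|<1$ off $Y$ to exclude maximum points outside $Y$. The only (harmless) difference is that you obtain the strict inequality off $Y$ directly from Cauchy--Schwarz, $g(x)=\left<a,G(x)\right>\leq\|a\|\,\|G(x)\|<\|a\|$, whereas the paper derives it by homogeneity and the symmetrized bound $\left|\sum_{i=1}^n a_i\pi_i(x)\right|\leq\sum_{i=1}^n a_i\pi_i(x_0)$ on $S^{n-1}$ --- a mild streamlining, not a different method.
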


\begin{proof}
Let $\pi_i$ be the projection onto the $i$-th coordinate of $\mathbb{R}^n$. Note that $$S^{n-1}=F(Y)=G(Y)\subset G(D) \subset\{x\in \mathbb{R}^n \ : \ \|x\|\leq1\}.$$ Let us see that any non-trivial linear combination $$f=\sum_{i=1}^na_i\pi_i:G(D)\longrightarrow\mathbb{R}$$ attains its maximum at a unique point $x_0\in G(D)$ and that this point necessarily belongs to $S^{n-1}$. Restricting $f$ to $S^{n-1}$, the same argument used in the proof of the previous theorem (Theorem \ref{t2}) shows us that there is unique $x_0\in S^{n-1}$ such that $f|_{S^{n-1}}$ reaches its maximum, that is, $$\sum_{i=1}^na_i\pi_i(x)\leq\sum_{i=1}^na_i\pi_i(x_0)$$ for all $x\in S^{n-1}$. Since for each $x\in S^{n-1}$, $-x$ also belongs to $S^{n-1}$ and taking into account that each $\pi_i$ is linear, we have that $$-\sum_{i=1}^na_i\pi_i(x)=\sum_{i=1}^na_i\pi_i(-x)\leq\sum_{i=1}^na_i\pi_i(x_0).$$ Therefore, we can conclude that for all $x\in S^{n-1}$ it is worth
\begin{equation}\label{eq}
\left|\sum_{i=1}^na_i\pi_i(x)\right|\leq\sum_{i=1}^na_i\pi_i(x_0).
\end{equation}
We now have to show that if $y\in G(D)\smallsetminus S^{n-1}$, then
\begin{equation*}
\sum_{i=1}^na_i\pi_i(y)<\sum_{i=1}^na_i\pi_i(x_0).
\end{equation*}
If $y=0$, the inequality is trivial. Let us then consider $y\neq0$. As $\|y\|<1$ by hypothesis, using \eqref{eq} we have
\begin{align*}
\sum_{i=1}^na_i\pi_i(y)&\leq\left|\sum_{i=1}^na_i\pi_i(y)\right|\\
&=\|y\|\cdot\left|\sum_{i=1}^na_i\pi_i\left(\frac{y}{\|y\|}\right)\right|\\
&<\left|\sum_{i=1}^na_i\pi_i\left(\frac{y}{\|y\|}\right)\right|\\
&\leq\sum_{i=1}^na_i\pi_i(x_0).
\end{align*}
From the previous calculations, we have that the function $h=\sum_{i=1}^na_i(\pi_i\circ G)$ reaches its maximum at $z=F^{-1}(x_0)$, and this point is unique, as $z_1\neq z$ implies $G(z_1)\neq G(z)$ and consequently $h(z_1)<h(z)$. Now, as $S^{n-1}\subset G(D)$ and the functions $\pi_i$, $i=1,\ldots,n$, are linearly independent in $S^{n-1}$, it follows that $\pi_i\circ G$, $i=1,\ldots,n$, are linearly independent in $D$. Taking the vector space $\mathrm{span}\{\pi_i\circ G \ : \ i=1,\ldots,n\}$ we obtain the result.
\end{proof}

\section{Main result}

Below, we will present some results that will be useful in proving the main result of this work. For $r>0$, let $B_r=\{x\in\mathbb{R}^n \ : \ \|x\|\leq r\}$.

\begin{theorem}\label{c1}
Let $n\geq 1$. The set $\hat{\mathcal{C}}_0(\mathbb{R}^n)$ is $n$-lineable.
\end{theorem}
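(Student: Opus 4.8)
The plan is to exhibit an explicit $n$-dimensional subspace and, for $n\ge 2$, to recognize it as a direct instance of Theorem \ref{t3} obtained from a carefully chosen extension map, and then to supplement it with the (separate) verification that its elements vanish at infinity. Concretely, I would take $D=\mathbb{R}^n$, the closed set $Y=S^{n-1}$, and $F=\mathrm{id}_{S^{n-1}}$, which is a continuous bijection onto $S^{n-1}$. The crucial choice is the extension
$$G(x)=\frac{2x}{1+\|x\|^2},\qquad x\in\mathbb{R}^n.$$
I would check that $G$ is continuous, that $G|_{S^{n-1}}=\mathrm{id}$ (since $\|x\|=1$ forces $G(x)=x$), and that $\|G(x)\|=\frac{2\|x\|}{1+\|x\|^2}<1$ whenever $\|x\|\neq 1$; this last inequality is nothing but $(\|x\|-1)^2>0$. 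These are exactly the hypotheses of Theorem \ref{t3}, so its conclusion gives that $\mathrm{span}\{\pi_i\circ G\ :\ i=1,\ldots,n\}$ is an $n$-dimensional subspace every nonzero element of which attains its maximum at a single point.

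Next I would add the one property that Theorem \ref{t3} does not itself provide, namely decay at infinity. Since $\pi_i\circ G(x)=\frac{2x_i}{1+\|x\|^2}$ and $|x_i|\le\|x\|$, any linear combination $f=\sum_{i=1}^n a_i(\pi_i\circ G)$ satisfies
$$|f(x)|=\frac{2\,|\langle a,x\rangle|}{1+\|x\|^2}\le\frac{2\,\|a\|\,\|x\|}{1+\|x\|^2}\xrightarrow[\ \|x\|\to\infty\ ]{}0,$$
so the whole span is contained in $\mathcal{C}_0(\mathbb{R}^n)$. Combined with the previous step, this yields $\mathrm{span}\{\pi_i\circ G\}\smallsetminus\{0\}\subset\hat{\mathcal{C}}_0(\mathbb{R}^n)$, which is precisely $n$-lineability for $n\ge 2$. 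For $n=1$ the statement reduces to $1$-lineability, which is already contained in Theorem \ref{gqc0}; alternatively one checks by hand that $\mathrm{span}\{x/(1+x^2)\}$ works, since $x/(1+x^2)$ vanishes at infinity and has a unique maximum at $1$ and a unique minimum at $-1$, so every nonzero scalar multiple of it has a unique maximum.

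The only genuine design decision, and hence the point requiring care, is the choice of $G$: it must restrict to the identity on $S^{n-1}$, have norm strictly below $1$ off the sphere (so that Theorem \ref{t3} forces the maximum of each $\pi_i\circ G$ combination to lie on $S^{n-1}$, where the linear-algebra argument of Theorem \ref{t2} pins it down to a single point), and simultaneously tend to $0$ at infinity in order to secure membership in $\mathcal{C}_0(\mathbb{R}^n)$. The map $2x/(1+\|x\|^2)$ threads all three constraints at once because the elementary inequality $2\|x\|\le 1+\|x\|^2$ is tight exactly on the unit sphere; once $G$ is fixed, every remaining verification is routine.
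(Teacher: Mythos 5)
Your proof is correct and takes essentially the same approach as the paper's: both apply Theorem \ref{t3} with $Y=S^{n-1}$ and $F=\mathrm{id}$, choosing an extension $G$ with $\|G(x)\|<1$ off the sphere and $\|G(x)\|\to 0$ at infinity, and then verify membership in $\mathcal{C}_0(\mathbb{R}^n)$ by bounding $\left|\sum_{i=1}^n a_i(\pi_i\circ G)(x)\right|$ by a constant times $\|G(x)\|$; the only difference is your single smooth formula $G(x)=2x/(1+\|x\|^2)$ versus the paper's piecewise inversion ($G(x)=x$ on $B_1$, $G(x)=x/\|x\|^2$ outside), which are interchangeable here. Your explicit handling of $n=1$ via Theorem \ref{gqc0} (or the span of $x/(1+x^2)$) is a minor refinement, since the paper states Theorem \ref{t3} only for $n\geq 2$ while asserting Theorem \ref{c1} for $n\geq 1$.
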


\begin{proof}
Let $Y = S^{n-1}$ and $F$ be the identity function defined on $Y$. Consider $G:\mathbb{R}^n\longrightarrow \mathbb{R}^n$ defined as
\[
G(x)=
\begin{cases}
x & \text{if}\ x\in B_1,\\	
\frac{x}{\|x\|^2} & \text{if}\ x\in\mathbb{R}^n\smallsetminus B_1.
\end{cases}
\]
Evidently $G$ is a continuous extension of $F$ with $\|G(x)\|<1$ for every $x\notin Y$ (moreover, observe that $\lim_{\|x\|\rightarrow\infty}\|G(x)\|=0$). Therefore, from Theorem \ref{t3}, we can conclude that $\hat{\mathcal{C}}(\mathbb{R}^n)$ is $n$-lineable. More precisely, analyzing the proof of Theorem \ref{t3}, we see that $\mathrm{span}\{\pi_i\circ G \ : \ i=1,\ldots,n\}$ has dimension $n$ and is contained in $\hat{\mathcal{C}}(\mathbb{R}^n)\cup\{0\}$, where $\pi_i$ is the projection onto the $i$-th coordinate of $\mathbb{R}^n$.

Let us see that $\mathrm{span}\{\pi_i\circ G \ : \ i=1,\ldots,n\}$ is contained in $\hat{\mathcal{C}}_0(\mathbb{R}^n)\cup\{0\}$. Given $a_1,\ldots,a_n\in\mathbb{R}$, we have
\[
\left|\sum_{i=1}^{n}a_i (\pi_i\circ G)(x)\right|\leq \left\|\sum_{i=1}^{n}a_i \pi_i\right\|\cdot\|G(x)\|,
\]
which together with $\lim_{\|x\|\rightarrow\infty}\|G(x)\|=0$, gives us that $\sum_{i=1}^{n}a_i (\pi_i\circ G)\in \hat{\mathcal{C}}_0(\mathbb{R}^n)\cup\{0\}$.
\end{proof}

Let us now recall the definition of an alternating function (see \cite{GQ}).

\begin{definition}
Let $A$ be a set with $\mathrm{card}(A)\geq2$. A function $f:A\longrightarrow\mathbb{R}$ is said to be alternating if there exist $x, y\in A$ such that $f(x)<0<f(y)$. A set of functions from $A$ to $\mathbb{R}$ is alternating when each function is alternating. When $V$ is a space of functions $f:A\longrightarrow\mathbb{R}$, we say that $V$ is an alternating space if $V\smallsetminus\{0\}$ is an alternating set.
\end{definition}

The result below will be useful in proving the non-$(n+2)$-lineability of $\hat{\mathcal{C}}_0(\mathbb{R}^n)$. The case $n=1$ is covered in \cite{GQ}.

\begin{proposition}\label{p1}
There does not exist an $(n+1)$-dimensional vector subspace $V$ of $\mathcal{{C}}_0(\mathbb{R}^n)$ such that $V$ is alternating and $V\smallsetminus\{0\}\subset\hat{\mathcal{C}}_0(\mathbb{R}^n)$.
\end{proposition}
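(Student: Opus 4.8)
The plan is to argue by contradiction using the Borsuk--Ulam theorem. Suppose such a $V$ exists with $\dim(V)=n+1$, and consider its unit sphere $S=\{f\in V:\|f\|_\infty=1\}$, which is homeomorphic to the $n$-sphere $S^n$ since $V$ is an $(n+1)$-dimensional normed space. For each $f\in S\subset V\smallsetminus\{0\}\subset\hat{\mathcal{C}}_0(\mathbb{R}^n)$, let $\varphi(f)\in\mathbb{R}^n$ be the unique point at which $f$ attains its maximum. The goal is to show that $\varphi\colon S\to\mathbb{R}^n$ is a continuous map satisfying $\varphi(-f)\neq\varphi(f)$ for every $f\in S$. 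This contradicts Borsuk--Ulam, which forces the existence of some $f\in S$ with $\varphi(f)=\varphi(-f)$.

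First I would confine all maximizers to a common compact set. Fixing a basis $f_1,\ldots,f_{n+1}$ of $V$, any $f=\sum_i c_if_i\in S$ has coefficients bounded by a constant $C$ (equivalence of norms in finite dimension), so $|f(x)|\leq C\sum_i|f_i(x)|\to0$ as $\|x\|\to\infty$, uniformly over $f\in S$. Since $f\mapsto\max f$ is continuous and strictly positive on the compact set $S$ (each $f$ is alternating, hence $\max f>0$), it is bounded below by some $\delta>0$. The uniform decay then yields $R>0$ with $|f(x)|<\delta$ whenever $\|x\|>R$ and $f\in S$, forcing $\varphi(f)\in B_R$ for all $f\in S$.

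Next I would establish continuity of $\varphi$. If $f_k\to f$ in $S$, then $\varphi(f_k)\in B_R$ admits a subsequence converging to some $p\in B_R$; using joint continuity of the evaluation map $(x,f)\mapsto f(x)$ together with $\max f_k\to\max f$, one obtains $f(p)=\max f$, whence $p=\varphi(f)$ by uniqueness of the maximizer. As every subsequential limit equals $\varphi(f)$, the map $\varphi$ is continuous. Finally, the alternating hypothesis delivers the antipodal condition: $\varphi(f)$ is a point where $f$ is positive, since $f(\varphi(f))=\max f>0$, whereas $\varphi(-f)$ is a point where $f$ is negative, since $f(\varphi(-f))=-\max(-f)=\min f<0$; therefore $\varphi(f)\neq\varphi(-f)$.

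The main obstacle is the continuity of the maximizer map $\varphi$, which genuinely requires both the uniform vanishing at infinity (to trap all maximizers in the fixed ball $B_R$, without which they could escape to infinity along a sequence) and the uniqueness of the maximum (to single out the subsequential limit). Once $\varphi$ is shown to be continuous and antipodally nonconstant, the Borsuk--Ulam theorem closes the argument immediately, giving $\dim(V)\leq n$.
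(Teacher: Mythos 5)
Your proof is correct, but it takes a genuinely different route from the paper. The paper proves Proposition \ref{p1} by \emph{reduction to the compact case}: it establishes the two uniform bounds $\sup_{g\in Z} m(g)<0$ and $\inf_{g\in Z} M(g)>0$ over the unit sphere $Z$ of $V$ (via the same compactness-of-coefficients argument you use for your $\delta$), picks $N$ between them, uses uniform vanishing at infinity to find a ball $B_A$ outside of which $|g|\leq N<M(g)$, and then observes that the restrictions $f_1|_{B_A},\ldots,f_{n+1}|_{B_A}$ span an $(n+1)$-dimensional subspace of $\mathcal{C}(B_A)$ inside $\hat{\mathcal{C}}(B_A)\cup\{0\}$, contradicting Theorem \ref{t1}. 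You instead inline a direct topological argument: trap all maximizers in a ball, prove continuity of the maximizer map $\varphi$ on the unit sphere $S\cong S^{n}$ of $V$, use the alternating hypothesis to get $f(\varphi(f))=\max f>0>\min f=f(\varphi(-f))$, and contradict Borsuk--Ulam. This is legitimate --- indeed it is essentially the engine behind the proof of Theorem \ref{t1} in \cite{GVDJD}, so you have merged the paper's compactification step with the proof of the cited theorem into one self-contained argument; what the paper's version buys is modularity (it stays elementary modulo the black-box Theorem \ref{t1}), while yours buys self-containedness at the price of invoking Borsuk--Ulam explicitly, and it even dispenses with one of the paper's two uniform bounds, since the antipodal separation only needs the pointwise fact $\min f<0<\max f$ rather than a uniform $N$. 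Two small details you should make explicit: the homeomorphism $S^{n}\to S$ must be chosen odd (e.g.\ $a\mapsto\bigl(\sum_i a_i f_i\bigr)/\bigl\|\sum_i a_i f_i\bigr\|_\infty$ in a fixed basis) so that the involution $f\mapsto -f$ on $S$ really corresponds to the antipodal map on $S^{n}$, and the well-definedness of $\varphi$ uses that members of $\hat{\mathcal{C}}_0(\mathbb{R}^n)$ attain their maximum by definition; both are routine and do not affect correctness.
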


\begin{proof}
Suppose there are $f_1,\ldots,f_{n+1}\in\mathcal{{C}}_0(\mathbb{R}^n)$ linearly independent normalized vectors such that $\mathrm{span}\{f_1,\ldots,f_{n+1}\}\smallsetminus\{0\}\subset\hat{\mathcal{C}}_0(\mathbb{R}^n)$ is alternating. Let $Z=S^{n-1}\cap\mathrm{span}\{f_1,\ldots,f_{n+1}\}$ the unitary sphere of $V=\mathrm{span}\{f_1,\ldots,f_{n+1}\}$. Since $V$ has finite dimension, there are $C_1,C_2>0$ such that 
\[
C_1\cdot\left\|\sum_{i=1}^{n+1}a_i f_i\right\|\leq\|(a_1,\ldots,a_{n+1})\|\leq C_2\cdot\left\|\sum_{i=1}^{n+1}a_i f_i\right\| 
\]
for all $(a_1,\ldots,a_{n+1})\in\mathbb{R}^{n+1}$. Then, given $\sum_{i=1}^{n+1}a_i f_i\in Z$, we have $C_1\leq\|(a_1,\ldots,a_{n+1})\|\leq C_2$.

Let us define for each $g=\sum_{i=1}^{n+1}a_i f_i\in Z$, 
\[
m_{(a_1,\ldots,a_{n+1})}=m(g)=\min\left\{\left(\sum_{i=1}^{n+1}a_i f_i\right)(x) \ : \ x\in\mathbb{R}^n\right\}
\]
and
\[
M_{(a_1,\ldots,a_{n+1})}=M(g)=\max\left\{\left(\sum_{i=1}^{n+1}a_i f_i\right)(x) \ : \ x\in\mathbb{R}^n\right\}.
\]
Since $\sum_{i=1}^{n+1}a_i f_i\in Z$ is an alternating function, we have $m_{(a_1,\ldots,a_{n+1})}<0<M_{(a_1,\ldots,a_{n+1})}$.
Let us see that
\[
\sup\left\{m_{(a_1,\ldots,a_{n+1})} \ : \ \sum_{i=1}^{n+1}a_i f_i\in Z\right\}<0.
\]
Indeed, if this were not the case, there would be a sequence $(a^{(k)}_1,\ldots,a^{(k)}_{n+1})_{k\in\mathbb{N}}\subset B_{C_2}\subset\mathbb{R}^{n+1}$ such that
\begin{equation}\label{eq1}
-\frac{1}{k}<m_{(a^{(k)}_1,\ldots,a^{(k)}_{n+1})}<0
\end{equation}
for all $k\in\mathbb{N}$. Since $B_{C_2}\subset\mathbb{R}^{n+1}$ is compact, we can assume without loss of generality that $(a^{(k)}_1,\ldots,a^{(k)}_{n+1})\overset{k\rightarrow\infty}{\longrightarrow}(a_1,\ldots,a_{n+1})\in B_{C_2}$. Thus, we conclude that
\[
\sum_{i=1}^{n+1}a^{(k)}_i f_i\overset{k\rightarrow\infty}{\longrightarrow}\sum_{i=1}^{n+1}a_i f_i\in Z
\]
and
\[
m_{(a^{(k)}_1,\ldots,a^{(k)}_{n+1})}\overset{k\rightarrow\infty}{\longrightarrow}m_{(a_1,\ldots,a_{n+1})}.
\]
Therefore, from \eqref{eq1} we obtain $m_{(a_1,\ldots,a_{n+1})}=0$, however this contradicts the fact that $\sum_{i=1}^{n+1}a_i f_i$ is alternating.

Similarly, we obtain that
\[
\inf\left\{M_{(a_1,\ldots,a_{n+1})} \ : \ \sum_{i=1}^{n+1}a_i f_i\in Z\right\}>0.
\]

Now, let $N>0$ be such that for all $g\in Z$,
\[
m(g)<-N<0<N<M(g).
\]
Such $N$ exists due to the inequalities we have just verified. Since $f_1,\ldots,f_{n+1}\in\hat{\mathcal{C}}_0(\mathbb{R}^n)$ and for $\sum_{i=1}^{n+1}a_i f_i\in Z$, $a_1,\ldots,a_{n+1}\in [-C_2,C_2]$, there exists $A>0$, large enough, so that for any $g=\sum_{i=1}^{n+1}a_i f_i\in Z$, we have $|g(x)|\leq N$ whenever $\|x\|> A$, that is, we have $g(x)<M(g)$ for all $g\in \mathrm{span}\{f_1,\ldots,f_{n+1}\}\smallsetminus\{0\}$ and $x\in\mathbb{R}^n\smallsetminus B_A$. Now considering the subspace $\mathrm{span}\{f_1|_{B_A},\ldots,f_{n+1}|_{B_A}\}$ of $\mathcal{C}(B_A)$, we see that every non-trivial linear combination of $f_1|_{B_A},\ldots,f_{n+1}|_{B_A}$ attains a unique strictly positive maximum point. This tells us that the functions $f_1|_{B_A},\ldots,f_{n+1}|_{B_A}$ are linearly independent and that $\mathrm{span}\{f_1|_{B_A},\ldots,f_{n+1}|_{B_A}\}\smallsetminus\{0\}\subset \hat{\mathcal{C}}(B_A)$. This proves the $(n+1)$-lineability of $\hat{\mathcal{C}}(B_A)$, which is absurd, as it contradicts Theorem \ref{t1}.
\end{proof}

%
%

\begin{proposition} \cite{GQ} \label{p2}
Let $n\geq 2$, and let $A$ be a set. Every $n$-dimensional space of functions $f:A\longrightarrow \mathbb{R}$ contains an alternating space $(n-1)$-dimensional.
\end{proposition}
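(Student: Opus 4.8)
The plan is to show that the only obstruction to a subspace of $V$ being alternating is a single closed convex cone, and then to cut that cone with a hyperplane. Fix a basis $f_1,\dots,f_n$ of $V$ and identify $V$ with $\mathbb{R}^n$ through the coordinates $(a_1,\dots,a_n)$ of $g=\sum_{i=1}^n a_i f_i$. By definition, a nonzero $g$ fails to be alternating precisely when it is one-signed, that is, when $g(x)\ge 0$ for all $x\in A$ or $g(x)\le 0$ for all $x\in A$. Hence, setting
\[
P=\Big\{g\in V \ : \ g(x)\ge 0 \text{ for all } x\in A\Big\},
\]
a nonzero $g\in V$ is alternating if and only if $g\notin P\cup(-P)$.

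First I would record two structural facts about $P$. For each fixed $x\in A$ the evaluation $g\mapsto g(x)$ is a linear functional on the finite-dimensional space $V$, so $\{g:g(x)\ge 0\}$ is a closed half-space through the origin, and therefore $P=\bigcap_{x\in A}\{g:g(x)\ge 0\}$ is a closed convex cone. Moreover $P$ is pointed: if $g\in P\cap(-P)$ then $g(x)=0$ for every $x$, so $g$ is the zero function, and since $f_1,\dots,f_n$ are linearly independent as functions this forces $g=0$ in $V$; thus $P\cap(-P)=\{0\}$.

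The crucial step is to produce a linear functional $\varphi:V\longrightarrow\mathbb{R}$ with $\varphi(g)>0$ for every $g\in P\smallsetminus\{0\}$, and this is exactly where the closedness and pointedness of $P$ are used. If $P=\{0\}$, any nonzero $\varphi$ works. Otherwise, fixing the Euclidean norm $\|\cdot\|$ coming from the identification $V\cong\mathbb{R}^n$ and letting $S$ be its unit sphere, the set $P\cap S$ is compact, so its convex hull $K$ is compact as well. I would then check that $0\notin K$: a representation $0=\sum_j\lambda_j p_j$ with $p_j\in P\cap S$, $\lambda_j>0$, $\sum_j\lambda_j=1$ would give $p_1=-\sum_{j\ge 2}(\lambda_j/\lambda_1)p_j\in P\cap(-P)=\{0\}$, contradicting $p_1\in S$. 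Separating the point $0$ from the compact convex set $K$ yields $\varphi$ and $c>0$ with $\varphi\ge c$ on $K\supseteq P\cap S$, and homogeneity upgrades this to $\varphi(g)=\|g\|\cdot\varphi\!\left(g/\|g\|\right)\ge c\|g\|>0$ for all $g\in P\smallsetminus\{0\}$.

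Finally, I would take $W=\ker\varphi$, a subspace of dimension $n-1$. For $g\in W\smallsetminus\{0\}$ we have $\varphi(g)=0$, so neither $g$ nor $-g$ can lie in $P\smallsetminus\{0\}$ (on which $\varphi$ is strictly positive); hence $g\notin P\cup(-P)$ and $g$ is alternating. Therefore $W$ is an $(n-1)$-dimensional alternating subspace of $V$, as required. I expect the main obstacle to be the crucial step above: one must first observe that $P$ is closed in order to separate at all, and then, because $P$ need not have nonempty interior, carry out the separation against the compact cross-section $P\cap S$ rather than against the cone itself.
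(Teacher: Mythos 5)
Your proof is correct. Note that for this statement the paper offers no proof of its own --- Proposition \ref{p2} is quoted from \cite{GQ} --- so the only comparison available is with the cited source, and your argument is the standard convex-separation proof of that lemma: you correctly characterize the non-alternating elements as $P\cup(-P)$ for the closed, pointed convex cone $P$ of nonnegative functions, obtain a functional strictly positive on $P\smallsetminus\{0\}$ by strictly separating $0$ from the compact convex hull of the cross-section $P\cap S$ (rightly so, since $P$ may have empty interior, which would defeat a naive separation against the cone itself), and take the hyperplane $W=\ker\varphi$. All the delicate points are attended to: closedness of $P$ via continuity of the evaluation functionals on the finite-dimensional $V$, pointedness via linear independence, compactness of the convex hull (Carath\'eodory), exclusion of $0$ from that hull using $P\cap(-P)=\{0\}$, and the degenerate case $P=\{0\}$. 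The argument is complete as written.
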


Let us now prove our main result.

\begin{theorem}
Let $n\geq 2$. The set $\hat{\mathcal{C}}_0(\mathbb{R}^n)$ is $n$-lineable but not $(n+2)$-lineable.
\end{theorem}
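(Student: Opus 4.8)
The plan is to treat the two halves of the statement separately, since both rely entirely on machinery already assembled above. The $n$-lineability is immediate: it is exactly the conclusion of Theorem \ref{c1}, which was proved for all $n\geq 1$, so the case $n\geq 2$ is simply a special instance. I would therefore dispose of the lower bound in a single line by invoking that theorem, recalling that the witnessing space is $\mathrm{span}\{\pi_i\circ G \ : \ i=1,\ldots,n\}$, where $G$ is the inversion map used there.

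For the non-$(n+2)$-lineability I would argue by contradiction. Suppose $V\subset\mathcal{C}_0(\mathbb{R}^n)$ is a subspace with $\dim(V)=n+2$ and $V\smallsetminus\{0\}\subset\hat{\mathcal{C}}_0(\mathbb{R}^n)$. Viewing the elements of $V$ as functions $f:\mathbb{R}^n\longrightarrow\mathbb{R}$ and applying Proposition \ref{p2} with $A=\mathbb{R}^n$ to this $(n+2)$-dimensional space (legitimate since $n+2\geq 2$), I obtain an alternating subspace $W\subseteq V$ of dimension $(n+2)-1=n+1$. Because $W\subseteq V$, the space $W$ inherits $W\smallsetminus\{0\}\subset\hat{\mathcal{C}}_0(\mathbb{R}^n)$ and $W\subset\mathcal{C}_0(\mathbb{R}^n)$. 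Thus $W$ is precisely an $(n+1)$-dimensional alternating subspace of $\mathcal{C}_0(\mathbb{R}^n)$ with $W\smallsetminus\{0\}\subset\hat{\mathcal{C}}_0(\mathbb{R}^n)$, which is exactly the configuration forbidden by Proposition \ref{p1}. This contradiction shows that no $(n+2)$-dimensional such $V$ exists, establishing the non-$(n+2)$-lineability.

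The synthesis is clean because the genuine difficulty lies in the two propositions rather than in this final assembly: the lower bound draws on Theorem \ref{t3} via the explicit extension $G$, while Proposition \ref{p1} reduces matters to a compact ball $B_A$ and then invokes the dimension bound of Theorem \ref{t1}. I do not expect a real obstacle at this last stage; the only points requiring care are the dimension bookkeeping---verifying that shaving off one dimension through Proposition \ref{p2} lands exactly on the $(n+1)$-dimensional case excluded by Proposition \ref{p1}---and confirming that the extracted alternating subspace $W$ still lies inside $\mathcal{C}_0(\mathbb{R}^n)$, so that Proposition \ref{p1} applies without modification.
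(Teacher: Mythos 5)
Your proof is correct and follows essentially the same route as the paper: the lower bound is quoted directly from Theorem \ref{c1}, and the upper bound proceeds by the identical contradiction, extracting an $(n+1)$-dimensional alternating subspace via Proposition \ref{p2} and ruling it out with Proposition \ref{p1}. Your extra care in checking that the extracted subspace inherits membership in $\mathcal{C}_0(\mathbb{R}^n)$ and in $\hat{\mathcal{C}}_0(\mathbb{R}^n)\cup\{0\}$ matches the paper's (brief) justification, so nothing further is needed.
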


\begin{proof}
The $n$-lineability follows immediately from Theorem \ref{c1}. Now, suppose, for the sake of contradiction, that there exists a subspace $W$ of ${\mathcal{C}}_0(\mathbb{R}^n)$, $(n+2)$-dimensional, such that
\begin{equation}\label{eq4}
W\smallsetminus\{0\}\subset \hat{\mathcal{C}}_0(\mathbb{R}^n).
\end{equation}

From Proposition \ref{p2}, $W$ has an alternating subspace $W_0$ of dimension $n+1$. From \eqref{eq4}, it follows that $W_0\smallsetminus\{0\}\subset \hat{\mathcal{C}}_0(\mathbb{R}^n)$, which is a contradiction due to Proposition \ref{p1}.
\end{proof}

A comprehensive examination of the outcomes pertaining to the content presented herein leads us to believe that for $n\geq 2$ the set $\hat{\mathcal{C}}_0(\mathbb{R}^n)$ is not $(n+1)$-lineable.

\begin{conjecture}
Let $n\geq 2$. The set $\hat{\mathcal{C}}_0(\mathbb{R}^n)$ is not $(n+1)$-lineable.
\end{conjecture}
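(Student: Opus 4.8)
The plan is to set up a Borsuk--Ulam argument keyed to the ``location of the maximum'' map, whose dimension count is sharp precisely at level $n+1$. Suppose, for contradiction, that $W\subset\mathcal{C}_0(\mathbb{R}^n)$ is a subspace with $\dim(W)=n+1$ and $W\smallsetminus\{0\}\subset\hat{\mathcal{C}}_0(\mathbb{R}^n)$. Let $S_W=\{g\in W:\|g\|_\infty=1\}$ be its unit sphere; since $W$ is an $(n+1)$-dimensional normed space, $S_W$ is homeomorphic to $S^{n}$ and the map $g\mapsto -g$ corresponds to the antipodal map. For each $g\in S_W$ let $\mu(g)\in\mathbb{R}^n$ denote the unique point at which $g$ attains its maximum, which is well defined and finite because $g\in\hat{\mathcal{C}}_0(\mathbb{R}^n)$. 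Observe that $\mu(-g)$ is the unique point at which $g$ attains its \emph{minimum}; hence $\mu(g)=\mu(-g)$ would force $g$ to attain its maximum and its minimum at the same point, so $g$ would be constant, and since $g\to 0$ at infinity this gives $g\equiv 0$, contradicting $g\in S_W$. Thus, \emph{provided $\mu\colon S_W\to\mathbb{R}^n$ is a well-defined continuous map into $\mathbb{R}^n$}, the Borsuk--Ulam theorem (any continuous map $S^{n}\to\mathbb{R}^{n}$ identifies some pair of antipodes) produces $g$ with $\mu(g)=\mu(-g)$, the desired contradiction. The entire difficulty is therefore concentrated in showing that $\mu$ is continuous with values in $\mathbb{R}^n$.

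Next I would isolate the two analytic facts that make $\mu$ legitimate: \emph{properness} (the maximizers do not escape to infinity, i.e.\ $\{\mu(g):g\in S_W\}$ lies in a fixed ball $B_A$) and \emph{continuity} ($g_k\to g$ uniformly implies $\mu(g_k)\to\mu(g)$). When $W$ is \emph{alternating}, both are available through the mechanism already used in Proposition \ref{p1}: compactness of $S_W$ together with the alternating hypothesis yields $N>0$ with $m(g)<-N<0<N<M(g)$ for every $g\in S_W$, and since each $g$ is within $\varepsilon$ of $0$ outside a fixed ball, a maximum value $M(g)\geq N$ can only be attained inside that ball, giving properness. Continuity then follows from uniqueness of the maximizer together with the uniform convergence: any subsequential limit $z$ of $\mu(g_k)$ lies in $B_A$ and satisfies $g(z)=\lim M(g_k)=M(g)$, so $z$ maximizes $g$ and hence $z=\mu(g)$. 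In this alternating regime the Borsuk--Ulam step goes through and reproves, now in the sharp dimension $n+1$, the conclusion of Proposition \ref{p1}.

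The remaining, and genuinely hard, case is when $W$ is \emph{not} alternating, i.e.\ $W$ contains a sign-definite function, say $f\geq 0$. Then $-f\in\hat{\mathcal{C}}_0(\mathbb{R}^n)$ must attain its maximum value $0$ at a single point $q$ (the unique zero of $f$), so $M(f)=0$ sits at exactly the level approached at infinity. Consequently the properness argument breaks down precisely along sign-definite directions: an arbitrarily small perturbation of such an $f$ can create a faintly positive maximum far out near infinity, letting $\mu$ jump off to infinity, so $\mu$ is neither proper nor continuous there. The crux of the conjecture is thus to rule out sign-definite members of an $(n+1)$-dimensional subspace, equivalently to prove that any such $W$ is automatically alternating, after which the Borsuk--Ulam argument (or directly Proposition \ref{p1}) finishes. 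I would attack this last point by combining the sign-definite $f$ with the $n$-dimensional alternating subspace $W_0\subset W$ supplied by Proposition \ref{p2}, writing $W=W_0\oplus\mathbb{R}f$ and analyzing the maximizers along the pencils $w+tf$ with $w\in W_0$ near the distinguished point $q$, aiming to extract a parity or degree contradiction from the interaction between the bounded maximizer map on $S_{W_0}\cong S^{n-1}$ and the escaping maximizer in the $f$-direction; alternatively, one could seek a Borsuk--Ulam variant valid for maps continuous off the (expectedly thin) set of sign-definite directions. I expect this elimination of sign-definite directions, that is, the properness of $\mu$, to be the main obstacle, which is exactly why the statement is presently only a conjecture.
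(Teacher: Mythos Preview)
The statement you are addressing is presented in the paper as a \emph{conjecture}, not a theorem; the authors offer no proof and explicitly leave the $(n+1)$-lineability question open. There is therefore no proof in the paper to compare your proposal against.

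That said, your outline is a sound and natural strategy. The Borsuk--Ulam argument on the maximizer map $\mu\colon S_W\to\mathbb{R}^n$ is correct as stated, and in the alternating case your properness and continuity verifications go through and yield an alternative proof of Proposition~\ref{p1} (the paper's own route there restricts to a large ball and invokes Theorem~\ref{t1} instead). You also correctly isolate the genuine obstruction: an $(n+1)$-dimensional $W$ with $W\smallsetminus\{0\}\subset\hat{\mathcal{C}}_0(\mathbb{R}^n)$ need not be alternating a priori, and a sign-definite $f\in W$ (whose unique zero is the maximizer of $-f$, with maximum value exactly $0$) destroys properness of $\mu$ precisely as you describe. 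Your closing suggestions for eliminating such sign-definite directions are heuristic and do not constitute a proof, as you yourself acknowledge; this is exactly the gap that keeps the statement a conjecture in the paper as well.
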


\begin{remark}
The main objective of \cite[Theorem 4]{AB} is to relax the assumptions of \cite[Corollary 2.3]{FPRR}. To achieve this goal, we utilized an improved version, by P. Leonetti et al., of \cite[Theorem 2.5]{BO} (see \cite[Lemma 3.1 and Corollary 3.2]{LRS}). However, in our paper \cite{AB}, we mistakenly stated this improved version, and this mistake carried over to \cite[Theorem 4]{AB}. Below are the correct statements of these results and the correct proof for \cite[Theorem 4]{AB}. We apologize for this, although no other result or application of \cite{AB} was affected by this.

\begin{theorem}\label{aaa} (see \cite[Theorem 2.5]{BO} and \cite[Lemma 3.1 and Corollary 3.2]{LRS})
Let $E$ be a metrizable topological vector space, and let $F$ be a vector subspace of $E$. If $\mathrm{codim}(F)\geq \mathrm{w}(E)$, then $E\smallsetminus F$ is $\mathrm{w}(E)$-dense-lineable\footnote{Let $E$ be a topological vector space and $\alpha$ be a cardinal number. We say that a subset $A$ of $E$ is $\alpha$-dense-lineable whenever $A \cup \{0\}$ contains a dense vector subspace $F$ of $E$ with $\dim(F) = \alpha$.}. Here, $\mathrm{w}(E)$ denotes the weight of $E$, and $\mathrm{codim}(F)$ refers to the algebraic codimension of $F$.
\end{theorem}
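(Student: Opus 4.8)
The plan is to reduce the statement to a linear-independence problem in the quotient $E/F$ and to construct the required subspace by transfinite recursion, using the density hypothesis to keep every newly chosen vector close to a prescribed point of a fixed dense set. Write $\kappa=\mathrm{w}(E)$ and let $q\colon E\longrightarrow E/F$ be the canonical projection, so that $\dim(E/F)=\mathrm{codim}(F)\geq\kappa$. Since $E$ is metrizable, its weight coincides with its density character, so I may fix a dense set $\{x_\alpha:\alpha<\kappa\}\subset E$ together with a translation-invariant metric $d$ inducing the topology (Birkhoff--Kakutani). The goal is to produce a vector subspace $G\subset E$ with $\dim(G)=\kappa$, $\overline{G}=E$, and $G\cap F=\{0\}$; the last condition is exactly the injectivity of $q|_{G}$, i.e. the linear independence in $E/F$ of the $q$-images of a basis of $G$.

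\textbf{Main construction.} Because $\kappa\geq\aleph_0$, the product $\kappa\times\mathbb{N}$ has cardinality $\kappa$; fix a bijection enumerating it as $\{(\alpha_\gamma,m_\gamma):\gamma<\kappa\}$. I build vectors $g_\gamma$ by recursion on $\gamma<\kappa$ so that, at stage $\gamma$, the family $\{q(g_\delta):\delta<\gamma\}$ is linearly independent in $E/F$ and $d(g_\gamma,x_{\alpha_\gamma})<1/m_\gamma$. Set
\[
H_\gamma=F+\mathrm{span}\{g_\delta:\delta<\gamma\}=q^{-1}\bigl(\mathrm{span}\{q(g_\delta):\delta<\gamma\}\bigr).
\]
Since $\gamma<\kappa$ forces $\mathrm{card}(\gamma)<\kappa\leq\dim(E/F)$, the subspace $\mathrm{span}\{q(g_\delta):\delta<\gamma\}$ is proper in $E/F$, hence $H_\gamma$ is a \emph{proper} subspace of $E$. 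I then choose $g_\gamma$ close to $x_{\alpha_\gamma}$ but outside $H_\gamma$; the relation $g_\gamma\notin H_\gamma$ means precisely $q(g_\gamma)\notin\mathrm{span}\{q(g_\delta):\delta<\gamma\}$, so the recursion hypothesis is preserved.

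\textbf{Key elementary lemma.} The recursion step rests on the fact that a proper subspace $H$ of a topological vector space fills no neighborhood: given any $u\in E$ and any neighborhood $U$ of $u$, pick $z\notin H$ and observe that $u+tz\notin H$ for every scalar $t\neq 0$ (as $H$ is a subspace), while $u+tz\to u$ as $t\to 0$ by continuity of the vector operations; hence $U\smallsetminus H\neq\varnothing$. Applying this with $u=x_{\alpha_\gamma}$ and $U$ the open ball of radius $1/m_\gamma$ about $x_{\alpha_\gamma}$ yields an admissible $g_\gamma$.

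\textbf{Verification and main obstacle.} Put $G=\mathrm{span}\{g_\gamma:\gamma<\kappa\}$. The images $q(g_\gamma)$ are linearly independent, so the $g_\gamma$ are linearly independent and $\dim(G)=\kappa$; moreover any nonzero $g=\sum_i c_i g_{\gamma_i}\in G$ satisfies $q(g)=\sum_i c_i q(g_{\gamma_i})\neq 0$, i.e. $g\notin F$, giving $G\cap F=\{0\}$ and thus $G\smallsetminus\{0\}\subset E\smallsetminus F$. Density is secured because, for each fixed $\alpha<\kappa$, the indices of the form $(\alpha,m)$ with $m\in\mathbb{N}$ produce vectors $g_\gamma\in G$ with $d(g_\gamma,x_\alpha)<1/m$, whence $x_\alpha\in\overline{G}$; as $\{x_\alpha:\alpha<\kappa\}$ is dense we obtain $\overline{G}=E$. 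Therefore $E\smallsetminus F$ is $\mathrm{w}(E)$-dense-lineable, as claimed. I expect the genuinely delicate points to be the bookkeeping ones: verifying $\mathrm{card}(\gamma)<\kappa$ for every $\gamma<\kappa$ (which holds because $\kappa$ is an initial ordinal) and the identification $\mathrm{w}(E)=\mathrm{dens}(E)$ valid for metrizable $E$; the properness of $H_\gamma$ at each stage — the heart of the argument — is purchased cheaply by the codimension hypothesis $\mathrm{codim}(F)\geq\mathrm{w}(E)$.
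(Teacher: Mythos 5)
Your proof is correct, but note that the paper itself offers no proof of this theorem: it is stated as a quotation of \cite{BO}*{Theorem 2.5} as improved in \cite{LRS}*{Lemma 3.1 and Corollary 3.2}, so there is no in-paper argument to compare against. Your self-contained transfinite recursion is the natural one and is in the spirit of the cited literature: pass to $E/F$, enumerate the pairs (dense point, radius $1/m$) by ordinals below $\kappa=\mathrm{w}(E)$, and at each stage pick a vector close to the prescribed point but outside $H_\gamma=F+\mathrm{span}\{g_\delta:\delta<\gamma\}$, the properness of $H_\gamma$ being exactly where the hypothesis $\mathrm{codim}(F)\geq\mathrm{w}(E)$ enters (via $\mathrm{card}(\gamma)<\kappa\leq\dim(E/F)$, valid since $\kappa$ is an initial ordinal). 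The bookkeeping is sound: weight equals density character for metrizable spaces, the independence of the images $q(g_\gamma)$ simultaneously yields $\dim(G)=\kappa$ and $G\cap F=\{0\}$, and density of $G$ follows from the $(\alpha,m)$-indexing. One small imprecision in your key lemma: for $z\notin H$, the claim that $u+tz\notin H$ for \emph{every} $t\neq 0$ holds only when $u\in H$; in general, if $u+t_1z\in H$ and $u+t_2z\in H$ with $t_1\neq t_2$, then $(t_1-t_2)z\in H$ forces $z\in H$, so at most one value of $t$ can land in $H$, and the conclusion $U\smallsetminus H\neq\varnothing$ still follows (alternatively, when $u\notin H$ one has $u\in U\smallsetminus H$ outright). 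This is a one-line repair, not a gap. A cosmetic remark: translation invariance of the metric is never actually used, and metrizability could be weakened to "metrizable-like" use of a countable decreasing local base at each dense point, which is essentially how \cite{LRS} phrase it.
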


\begin{theorem}
Let $E$ be a metrizable topological vector space, and let $F$ be a proper vector subspace of $E$. If $\mathrm{codim}(F)\geq \mathrm{w}(E)$, then $E\smallsetminus F$ is not $(\alpha,\beta)$-spaceable\footnote{Let $E$ be a topological vector space, and let $\alpha$ and $\beta$ be cardinal numbers, with $\alpha<\beta$. We say that $A\subset E$ is $(\alpha,\beta)$-lineable if it is $\alpha$-lineable and for every subspace $F_\alpha\subset E$ with $F_\alpha\subset A\cup\{0\}$ and $\dim(F_\alpha)=\alpha$, there is a closed subspace $F_\beta\subset E$ with $\dim (F_\beta)=\beta$ and $F_\alpha\subset F_\beta \subset A\cup \{0\}$.} for all $\mathrm{w}(E)\leq \alpha \leq \mathrm{codim}(F)$, regardless of the cardinal number $\beta$.
\end{theorem}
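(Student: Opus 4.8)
The plan is to disprove $(\alpha,\beta)$-spaceability by violating its defining extension property for a single, cleverly chosen $\alpha$-dimensional subspace. Recall that $E\smallsetminus F$ is $(\alpha,\beta)$-spaceable only if \emph{every} $\alpha$-dimensional subspace $F_\alpha\subseteq(E\smallsetminus F)\cup\{0\}$ is contained in some \emph{closed} $\beta$-dimensional subspace $F_\beta$ with $F_\beta\subseteq(E\smallsetminus F)\cup\{0\}$. The whole argument rests on one topological observation: if $F_\alpha$ happens to be \emph{dense} in $E$, then any closed subspace containing it must be all of $E$. Since $F$ is a nontrivial proper subspace, $E$ is not contained in $(E\smallsetminus F)\cup\{0\}$, so no admissible $F_\beta$ can exist, \emph{regardless of its dimension $\beta$}. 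Thus it suffices to produce one dense $\alpha$-dimensional subspace $F_\alpha$ meeting $F$ only at the origin.

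First I would invoke Theorem \ref{aaa}: since $\mathrm{codim}(F)\geq\mathrm{w}(E)$, the set $E\smallsetminus F$ is $\mathrm{w}(E)$-dense-lineable, which hands me a dense subspace $D\subseteq(E\smallsetminus F)\cup\{0\}$ with $\dim(D)=\mathrm{w}(E)$ and $D\cap F=\{0\}$. If $\alpha=\mathrm{w}(E)$ I simply set $F_\alpha=D$. If $\alpha>\mathrm{w}(E)$, I would enlarge $D$ while preserving disjointness from $F$: fix an algebraic decomposition $E=F\oplus D\oplus C$, where $C$ is a complement of $F\oplus D$. Since $\mathrm{codim}(F)=\dim(D)+\dim(C)$ and $\mathrm{codim}(F)\geq\alpha>\mathrm{w}(E)=\dim(D)$, infinite cardinal arithmetic gives $\dim(C)=\mathrm{codim}(F)\geq\alpha$; choosing $B\subseteq C$ with $\dim(B)=\alpha$ and letting $F_\alpha=D\oplus B$ produces a subspace of dimension $\max\{\mathrm{w}(E),\alpha\}=\alpha$. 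Because $B\subseteq C$ and $F\cap(D\oplus C)=\{0\}$, we still have $F_\alpha\cap F=\{0\}$, i.e. $F_\alpha\subseteq(E\smallsetminus F)\cup\{0\}$; and $F_\alpha$ remains dense since it contains the dense set $D$.

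Finally I would close the argument: suppose, for contradiction, that $E\smallsetminus F$ were $(\alpha,\beta)$-spaceable. Applying the extension property to the $F_\alpha$ just built yields a closed $\beta$-dimensional subspace $F_\beta$ with $F_\alpha\subseteq F_\beta\subseteq(E\smallsetminus F)\cup\{0\}$. Taking closures, $E=\overline{F_\alpha}\subseteq F_\beta$, so $F_\beta=E$; but then $E\subseteq(E\smallsetminus F)\cup\{0\}$ forces $F=\{0\}$, contradicting that $F$ is nontrivial. Since this contradiction never used the value of $\beta$, the set $E\smallsetminus F$ fails to be $(\alpha,\beta)$-spaceable for every admissible $\beta$ and every $\mathrm{w}(E)\leq\alpha\leq\mathrm{codim}(F)$. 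The only delicate point, and the one I expect to require the most care, is the enlargement step: one must verify that the passage from $D$ to $F_\alpha$ simultaneously keeps the dimension equal to $\alpha$, keeps $F_\alpha$ disjoint from $F$, and keeps it dense. Density is automatic, but the dimension bookkeeping relies on the cardinal identity $\dim(C)=\mathrm{codim}(F)$ valid when $\mathrm{codim}(F)$ is infinite and strictly larger than $\dim(D)$, which is exactly where the hypothesis $\mathrm{w}(E)\leq\alpha\leq\mathrm{codim}(F)$ enters.
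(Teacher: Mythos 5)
Your proof is correct, and it is genuinely more self-contained than the paper's. Both arguments start from Theorem \ref{aaa}, but from there the paper outsources the rest: it invokes \cite[Theorem 1]{AB} to obtain $(\alpha,\mathrm{codim}(F))$-lineability, cites \cite[Theorem 2]{AB} to rule out $(\alpha,\beta)$-spaceability when $\mathrm{w}(E)\leq\alpha<\mathrm{codim}(F)$, and settles the remaining case $\alpha=\mathrm{codim}(F)=\mathrm{w}(E)$ by observing that any subspace contained in $(E\smallsetminus F)\cup\{0\}$ meets $F$ trivially and hence has dimension at most $\mathrm{codim}(F)<\beta$. You instead make the mechanism explicit: you enlarge the dense $\mathrm{w}(E)$-dimensional subspace $D$ to a dense $\alpha$-dimensional subspace $F_\alpha=D\oplus B$ still avoiding $F$, via the decomposition $E=F\oplus D\oplus C$ and the cardinal identity $\dim(C)=\mathrm{codim}(F)$ (valid since $\mathrm{w}(E)=\dim(D)$ is infinite and $\mathrm{codim}(F)\geq\alpha>\mathrm{w}(E)$), and then eliminate every candidate $F_\beta$ by the closure argument --- in substance you have re-proved, rather than cited, the content of \cite[Theorem 2]{AB}. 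All the steps check out: $F_\alpha\subseteq D\oplus C$ gives $F_\alpha\cap F=\{0\}$, the infinite-cardinal bookkeeping $\dim(D\oplus B)=\max\{\mathrm{w}(E),\alpha\}=\alpha$ is sound, and density forces any closed $F_\beta\supseteq F_\alpha$ to equal $E$ independently of $\beta$. Your route buys two things: it is uniform in $\alpha$ over the whole range, and in particular it transparently covers the boundary case $\alpha=\mathrm{codim}(F)>\mathrm{w}(E)$, which the paper's two-case split does not explicitly address (it does follow there from the same codimension observation, but the written proof skips it); the paper's version, in exchange, is shorter and reuses established machinery. One caveat applies to both proofs: you write ``nontrivial proper subspace,'' and indeed $F\neq\{0\}$ is needed --- if $F=\{0\}$ then $F_\beta=E$ itself witnesses $(\alpha,\dim(E))$-spaceability of $E\smallsetminus\{0\}$, so the hypothesis ``proper'' in the statement must be read as ``nontrivial,'' exactly as your final contradiction ($F\subseteq\{0\}$) requires.
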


\begin{proof}
From Theorem \ref{aaa} we conclude that $E\smallsetminus F$ is $\mathrm{w}(E)$-dense-lineable. From \cite[Theorem 1]{AB} we know that $E\smallsetminus F$ is $(\alpha, \mathrm{codim}(F))$-lineable for all $\alpha < \mathrm{codim}(F)$. In particular, if $\mathrm{w}(E) \leq \alpha < \mathrm{codim}(F)$, the result follows from \cite[Theorem 2]{AB}. If $\mathrm{codim}(F) = \mathrm{w}(E)$, there is no $\beta$-dimensional subspace of $E$ with $\beta > \mathrm{w}(E)$ contained in $(E\smallsetminus F) \cup\{0\}$.
\end{proof}
\end{remark}

\begin{bibdiv}
\begin{biblist}
\bibitem{AB} G. Araújo, A. Barbosa, \textit{A general lineability criterion for complements of vector spaces}, Rev. R. Acad. Cienc. Exactas Fís. Nat. Ser. A Mat. RACSAM \textbf{118} (2024), no. 1, Paper No. 5.

\bibitem{Studia2017} G. Araújo; L. Bernal-González; G.A. Muñoz-Fernández ; J.A. Prado-Bassas; J.B. Seoane-Sepúlveda. \textit{Lineability in sequence and function spaces}, Studia Math. 237 (2017), no. 2, 119--136.

\bibitem{AGS} R.M. Aron, V.I. Gurariy, and J.B. Seoane-Sep\'{u}lveda. \textit{Lineability and spaceability of sets of functions on $\mathbb{R}$}. Proc. Amer. Math. Soc., {\bf 133} (2005) 795--803.

\bibitem{aron} R.M. Aron, L. Bernal-Gonz\'{a}lez, D. Pellegrino, J. B. Seoane-Sep\'{u}lveda, \textit{Lineability: the search for linearity in mathematics}, Monographs and Research Notes in Mathematics. CRC Press, Boca Raton, FL, 2016, xix+308 pp.

\bibitem{BAMS2014} L. Bernal-González, D.M. Pellegrino, J. B. Seoane-Sepúlveda, \textit{Linear subsets of nonlinear sets in topological vector spaces},	Bull. Amer. Math. Soc. (N.S.) 51 (2014), no. 1, 71--130.

\bibitem{BHGJ} L. Bernal-González, H.J. Cabana-Méndez, G.A. Muñoz-Fernández, J.B. Seoane-Sepúlveda, \textit{On the dimension of subspaces of continuous functions attaining their maximum finitely many times}, Trans. Amer. Math. Soc.	\textbf{373} (2020), no. 5, 3063--3083.

\bibitem{BO} L. Bernal-González, M.O. Cabrera, \textit{Lineability criteria, with applications}, J. Funct. Anal. \textbf{266} (2014), 3997--4025 .

\bibitem{GVDJD} G. Botelho, V. V. Fávaro, D. Pellegrino, J.B. Seoane-Sepúlveda, D. Cariello, \textit{On very non-linear subsets of continuous functions}, Q. J. Math. \textbf{65} (2014), no. 3, 841--850.

\bibitem{FPRR} V. V. Fávaro, D. Pellegrino, A.B. Raposo Jr., G. Ribeiro, \textit{General criteria for a stronger notion of lineability}, Proc. Am. Math. Soc. (2023), in press. https://doi.org/10.1090/proc/16608

\bibitem{GQ} V. I. Gurariy, L. Quarta, \textit{On lineability of sets of continuous functions}, J. Math. Anal. Appl. \textbf{294} (2004), no. 1, 62--72.

\bibitem{LRS} P. Leonetti, T. Russo, J. Somaglia, Dense lineability and spaceability in certain subsets of $\ell_\infty$, Bull. London Math. Soc. \textbf{55} (2023), 2283--2303.
\end{biblist}
\end{bibdiv}
\end{document}